\documentclass[11pt]{amsart}

\usepackage{amscd}
\usepackage{amssymb}

\usepackage{graphicx}
\usepackage{caption}
\usepackage{subcaption}

\usepackage{times}

\usepackage{color}

\usepackage[colorlinks=true, pdfstartview=FitV, linkcolor=blue, pagebackref, citecolor=blue, urlcolor=blue]{hyperref}

\usepackage{amsthm}

\newtheorem{prop}[equation]{Proposition}

\newtheorem*{thm*}{Theorem}
\newtheorem*{prop*}{Proposition}
\newtheorem*{cor*}{Corollary}
\newtheorem*{lem*}{Lemma}
\newtheorem*{MT*}{Main Theorem}

\newtheorem*{ques*}{Question}
\newtheorem*{claim*}{Claim}

\theoremstyle{definition} %

\newtheorem*{defn*}{Definition}
\newtheorem{eg}[equation]{Example}

\theoremstyle{remark} %
\newtheorem{rmk}[equation]{Remark}

\newtheorem*{rmk*}{Remark}
\newtheorem*{rmks*}{Remarks}

\newcommand{\R}{\mathbb{R}}
\newcommand{\C}{\mathbb{C}}
\newcommand{\Z}{\mathbb{Z}}

\DeclareMathOperator{\tr}{Tr}

\newcommand{\vf}{\delta}
\newcommand{\hst}{\widetilde{\alpha}}

\newcommand{\qform}[1]{{\left\langle{#1}\right\rangle}}                   

\newcommand{\eand}{\quad\text{and}\quad}

\newcommand{\iso}{\xrightarrow{\sim}}

\DeclareMathOperator{\USp}{USp}
\DeclareMathOperator{\GL}{GL}
\DeclareMathOperator{\Spin}{Spin}
\DeclareMathOperator{\Sp}{Sp}
\DeclareMathOperator{\SU}{SU}
\DeclareMathOperator{\SO}{SO}

\newcommand{\ov}{\omega^\vee}
\newcommand{\av}{\alpha^\vee}

\newcommand{\cartan}[1]{\mathsf{#1}}
\newcommand{\At}{\cartan{A}}
\newcommand{\Bt}{\cartan{B}}
\newcommand{\Ct}{\cartan{C}}
\newcommand{\Gt}{\cartan{G}}

\newcommand{\Qv}{Q^\vee}

\newcommand{\Wa}{W_a}   

\newcommand{\slot}{\--}



\begin{document}

\title{Pictures of compact Lie groups (after Serre)}
\author{Skip Garibaldi}

\subjclass{22G20 (Primary); 22C05, 22E47 (Secondary)}

\begin{abstract}
We fill in the details in a procedure outlined by Serre for drawing pictures of compact real Lie groups.   In the case of $\Sp(2n)$, the picture generated by the method is connected with abelian varieties over a number field or a finite field.  We follow the procedure to produce pictures for the three simply connected simple groups of rank 2.  The pictures for two of these have previously been discussed in the literature in a different setting.  The remaining one, type $\Gt_2$, has the most complicated picture.
 \end{abstract}

\maketitle

This note fills in the details in a procedure for drawing pictures of compact Lie groups that was outlined by J-P.~Serre in an appendix to \cite{CDSS}.  These details provide a fun illustration of fundamental results about Lie groups and root systems.

Before describing the procedure, it's worth taking a moment to reflect on the goal.  If you were going to draw a picture of a Lie group, how would you do it?  You could draw the Dynkin diagram, but that's more of an abstract representation of the group than a true picture.  Let's say the group is compact, so you can hope to capture a picture of it on a piece of paper.
Already one of the smallest Lie groups, $\Gt_2$, is 14-dimensional, so what would it mean to draw a 2-dimensional version of it?  And since locally all the points look the same\footnote{In any topological group $G$, multiplication by an element $g$ defines a homeomorphism from $G$ to $G$ that identifies the neighborhood around $1$ with the neighborhood around $g$.  So locally the neighborhood of every point looks the same.}, the picture has to be a global one.  The procedure discussed here gives a picture of a rank $n$ compact group as a compact set in $\R^n$.

To make things really concrete, we provide the pictures and details for the three rank 2 cases.  Of these three cases, two have previously appeared in the literature in other guises.  The case of $\At_2$ (i.e., $\SU(3)$) was investigated in \cite{Kaiser}.  That paper concerned average eigenvalues for elements of a compact Lie groups under its natural representation, so the resulting pictures were 1- or 2-dimensional for all compact $G$, instead of the rank of $G$.  The case of $\Ct_2$ (i.e., $\Sp(4)$) was investigated in \cite{DiPH} in the course of finding an asymptotic formula for the number of isogeny classes of abelian varieties over a finite field, and in \cite{FKRS} in connection with determining the groups that occur as the Sato-Tate group of an abelian surface over a number field.  The remaining case of type $\Gt_2$ is the most complicated.

\subsection*{Plan of the paper} 
The paper is organized as follows.  We begin with defining a continuous function $\vf \colon G \to \R^n$ for $n$ the rank of a compact Lie group $G$, in section \ref{def.sec}.  The image $\vf(G)$ is the picture of $G$ referred to in the title.  In section \ref{compute.sec}, we use invariance properties of $\vf$ to describe two ways of drawing $\vf(G)$.  One is by randomly generating points in $\vf(G)$, which we refer to as painting with a shotgun; it can easily be done with a computer.  The other requires more hand computation but allows us to write down explicit formulas.  The next three sections each work through the details of one of the three possible rank 2 semisimple and simply connected compact Lie groups:
\begin{description}
\item[\S\ref{A.sec}] Killing-Cartan type $\At_2$, i.e., $G = \SU(3)$.   See Figure \ref{a2.fig}.
\item[\S\ref{B.sec}] Killing-Cartan type $\Bt_2 = \Ct_2$, i.e., $G = \Sp(4) = \Spin(5)$.  See Figure \ref{b2.fig}.
\item[\S\ref{G.sec}] Killing-Cartan type $\Gt_2$.  This is the group of automorphisms of the octonions.  See Figure \ref{g2.fig}.
\end{description}
Finally, in section \ref{haar.sec}, we use Serre's formula for the pushforward of the Haar measure on $G$ to graphically represent that density on the picture for $\Bt_2$ and $\Gt_2$, see Figure \ref{haar.fig}.

We assume some familiarity with the notions of semisimple Lie groups, such as root systems as in \cite[Ch.~VI]{Bou:g4} and tori and semisimple groups as in \cite[Ch.~IX]{Bou:g7}.

\begin{rmk*}[added 5 November 2022]
This note was always meant to be an exposition of things known to some experts, but this turned out to be more true than expected: the papers \cite{L14}, \cite{L17}, and \cite{L19} by Gilles Lachaud contain the same pictures and more results.
\end{rmk*}

\section{The definition of \texorpdfstring{$\vf$}{d}} \label{def.sec}

We start with a compact real Lie group $G$, which we assume is semisimple and simply connected.  The aim of this section is to define a map $\vf \!: G \to \R^n$ for $n$ the rank of $G$ and to note some of its basic properties.

The complexification $G_\C$ of $G$ is a complex Lie group that is semisimple and simply connected, and it has fundamental irreducible representations $\rho_i \!: G_\C \to \GL_{d_i}(\C)$ for $i = 1, \ldots, n$.  (These are typically discussed in a textbook when it is proved that they generate the representation ring of $G_\C$.)  Recall that these are defined to be the irreducible representation whose highest weight is a fundamental dominant weight; we write $\omega_i$ for the highest weight of $\rho_i$.

Each $\rho_i$ can be one of three flavors: real, complex, or quaternionic (sometimes called ``pseudo-real'')  \cite[Chap.~IX, App.~II]{Bou:g7}.  There is a criterion in terms of weights to distinguish these cases, namely that $\rho_i$ is complex if and only if $-w_0 \omega_i \ne \omega_i$, where $w_0$ is the longest element of the Weyl group \cite[\S{IX.7.2}, Prop.~1b]{Bou:g7}.

If $\rho_i$ is not complex,  then the composition 
\[
\chi_{\rho_i} \colon G \to G_\C \xrightarrow{\rho_i} \GL_{d_i}(\C) \xrightarrow{\tr} \C
\]
has image in $\R$, and not just $\C$.  (This is not surprising if $\rho_i$ is real, because in that case $\rho_i$ is obtained by complexifying  a representation $G \to \GL_{d_i}(\R)$.)  In this way, each $\rho_i$ that is not complex gives a continuous map $\chi_{\rho_i} \colon G \to \R$.
If $G$ is $\Sp(4)$ or $\Gt_2$, then neither of the fundamental irreducible representations are complex, and we define $\vf \colon G \to \R^2$ as $\chi_{\rho_1} \oplus \chi_{\rho_2}$.

If $\rho_i$ is complex, then composing $\rho_i$ with complex conjugation $G_\C \to G_\C$ gives a different fundamental irreducible representation $\rho_j$.  In this case,we arbitrarily pick $\chi_{\rho_i}$ and use the map $G \xrightarrow{\chi_{\rho_i}} \C \iso \R^2$, i.e., $g \mapsto (\Re \chi_{\rho_i}(g), \Im \chi_{\rho_i}(g))$.  If we had chosen instead $\chi_{\rho_j}$, the second coordinate would only differ by a sign, and we ignore this difference.  The two fundamental irreducible representations of $\SU(3)$ are interchanged by complex conjugation, so this process defines a map $\vf \!: \SU(3) \to \R^2$.

In the general case, where $G$ need not have rank 2, we define $\vf$ to be the direct sum of the maps $\chi_{\rho_i}$ defined in the two previous paragraphs.

\begin{eg} 
If $\rho_i$ is not complex, then $\chi_{\rho_i}(1_G) = d_i$.  If $\rho_i$ is complex, then $\chi_{\rho_i}(1_G) = (d_i, 0)$.  Thus we have computed $\vf(1_G)$.
\end{eg}

\begin{eg} \label{rootsum}
Because $G$ is compact, every representation $\rho$ is unitary, so for every $g \in G$, $\tr \rho(g)$ is a sum of $\dim \rho$ roots of unity.  Such a sum has maximum real value $\dim \rho$, obtained when all the roots of unity equal 1.  That is, $\max_{g \in G} \Re \tr \rho(g) = \dim \rho$, and that maximum is attained only for $g \in \ker \rho$.
\end{eg}

\subsection*{Visualizing the center}
Suppose $z$ is in the center $Z(G)$ of $G$.  
Since each representation $\rho_i$ is irreducible, the element $\rho_i(z)$ is a scalar matrix, so for $g \in G$ we have:
\[
\chi_i(zg) = \tr \rho_i(zg) = \rho_i(z) \tr \rho_i(g) = \rho_i(z) \chi_i(g).
\]
In this way,
$Z(G)$ acts as symmetries on $\vf(G)$.  
Moreover, this action is faithful.  Indeed, if $z \in Z(G)$ acts trivially on $\vf(G)$, then $\rho_i(z) = 1$ for all $i$, so all weights vanish on $z$ and $z = 1$.

If we have explicit realizations of the $\rho_i$ and of $Z(G)$, we may already know the image of a given $z\in Z(G)$ under each $\rho_i$.  Whether or not we have such explicit realizations, 
we can write down elements of $Z(G)$ explicitly in terms of the coroots (viewed as homomorphisms $\C^\times \to T_\C$ for a maximal torus $T$ in $G$) as in \cite[p.~298, Table 3]{OV1}, from which we obtain the image under $\rho_i$ and can calculate the action of $Z(G)$ on $\vf(G)$.

\begin{eg}[Rank 1] \label{rank1}
In the smallest case, where $G$ has rank 1,  $G = \SU(2)$ has type $\At_1$.  It has a single fundamental irreducible representation, the tautological inclusion $\rho \colon \SU(2) \to \GL_2(\C)$.  This representation is quaternionic, and $\vf := \tr \rho$ has image in $\R$.  Because $G$ is compact and connected, the image will be a closed interval.  Which one?

By Example \ref{rootsum}, the maximum value of $\vf(G)$ is $\tr \rho(1_G) = \dim \rho = 2$.  The non-identity element of the center, $-I_2$, acts by multiplication by $-1$ on $\vf(G)$, so $-2$ is the minimum value of $\vf(G)$.  Since $\vf(G)$ is a closed interval, we conclude that $\vf(G) = [-2, 2]$.
\end{eg}

\section{Drawing \texorpdfstring{$\vf(G)$}{d(G)}} \label{compute.sec}

Each map $\chi_\rho$ as in the definition of $\vf$ has the property that 
\[
\chi_\rho(hgh^{-1}) = \tr \rho(h) \rho(g) \rho(h)^{-1} = \chi_\rho(g) \quad \text{for $g, h \in G$}.
\]
That is, $\chi_\rho$ factors through the quotient space $G/G$ of conjugacy classes in $G$.  Intuition from undergraduate linear algebra suggests this is an attractive property: it feels ``basis independent''.  And in some sense, that is all that's happening, as the following proposition shows.

\begin{prop} \label{GG.prop}
$\vf$ is a homeomorphism of $G/G$ onto a compact subset of $\R^n$.
\end{prop}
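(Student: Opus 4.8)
The plan is to factor $\vf$ through the quotient $G/G$, check that the resulting map $\bar\vf\colon G/G\to\R^n$ is a continuous bijection onto its image, and then apply the standard fact that a continuous bijection from a compact space to a Hausdorff space is a homeomorphism. The factoring is exactly the computation $\chi_\rho(hgh^{-1})=\chi_\rho(g)$ already recorded, so $\vf$ descends to a continuous map $\bar\vf$ on $G/G$ with its quotient topology. Since $G$ is compact, the orbit space $G/G$ of the conjugation action is compact and Hausdorff, and its continuous image $\vf(G)=\bar\vf(G/G)$ is a compact subset of $\R^n$. Thus the whole statement reduces to showing that $\bar\vf$ is injective, after which the compact-to-Hausdorff lemma yields the homeomorphism onto $\vf(G)$ for free.

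To set up injectivity I would first translate the hypothesis $\vf(x)=\vf(y)$ into statements about characters. For each fundamental $\rho_i$ that is not complex this reads $\chi_{\rho_i}(x)=\chi_{\rho_i}(y)$ directly. For each complex $\rho_i$, matching the two real coordinates $(\Re\chi_{\rho_i},\Im\chi_{\rho_i})$ gives equality of the complex values $\chi_{\rho_i}(x)=\chi_{\rho_i}(y)$; and because the conjugate representation $\rho_j$ satisfies $\chi_{\rho_j}=\overline{\chi_{\rho_i}}$, its character agrees at $x$ and $y$ as well. So the sign ambiguity noted in the definition of $\vf$ costs nothing: $\vf(x)=\vf(y)$ forces $\chi_{\rho_i}(x)=\chi_{\rho_i}(y)$ for every fundamental representation, $i=1,\dots,n$.

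The heart of the proof is to promote this to the conclusion that $x$ and $y$ are conjugate, and I expect this to be the main obstacle. The input is that the characters $\chi_{\rho_1},\dots,\chi_{\rho_n}$ generate the representation ring of $G_\C$ as a polynomial ring, so the character of every irreducible representation is a polynomial in them; hence $\chi_\rho(x)=\chi_\rho(y)$ for all irreducible $\rho$. One then uses that irreducible characters separate conjugacy classes in a compact group: if $x$ and $y$ were not conjugate, their conjugacy classes would be disjoint compact sets, so averaging a separating function over the conjugation action produces a continuous class function distinguishing them, and such functions are uniformly approximable by linear combinations of irreducible characters, forcing some irreducible character to differ at $x$ and $y$. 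This contradiction gives injectivity, and with it the proposition. The remaining verifications (continuity, compactness, and the Hausdorff property of $G/G$) are formal consequences of the compactness of $G$.
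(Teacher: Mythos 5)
Your proof is correct and follows essentially the same route as the paper: injectivity via the fact that the fundamental characters determine all irreducible characters, which separate conjugacy classes of a compact group, followed by the compact-to-Hausdorff lemma. You additionally spell out the bookkeeping for the complex (conjugate-pair) representations and the sign ambiguity, which the paper's one-line injectivity argument leaves implicit.
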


\begin{proof}
We check injectivity.  If $g_1, g_2 \in G$ satisfy $\vf(g_1) = \vf(g_2)$, then the characters of all the fundamental irreducible representations agree on $g_1$ and $g_2$, so all characters agree on $g_1$ and $g_2$.  It follows from the density of the characters in the space of square-integrable class functions that $g_1$ and $g_2$ are conjugate.

Since $\vf$ is a bijection $G/G \xrightarrow{\sim} \vf(G)$ and $G/G$ is compact, $\vf$ is a homeomorphism as claimed.
\end{proof}

\subsection*{Painting with a shotgun}
Let $T$ be a maximal torus in $G$.  Every element of $G$ is conjugate to an element of $T$ \cite[\S{IX.2.2}, Th.~2]{Bou:g7}, so $\vf(T) = \vf(G)$.  Therefore, if we want to draw $\vf(G)$, it suffices to produce elements $t\in T$ and compute $\vf(t)$.

The collection of algebraic homomorphisms from $\C^\times$ to the complexification $T_\C$ of $T$ is an abelian group which, because $G$ is simply connected, is identified with the coroot lattice $\Qv$.   (If we write $R$ for the set of roots of $G$ with respect to $T$, i.e., the nonzero weights of the action of $T$ on the Lie algebra of $G$, then Bourbaki writes $Q(R^\vee)$ in \cite{Bou:g4} for what we are calling $\Qv$.)  Setting $V = \R \otimes \Qv$, we may view $V$ as the Lie algebra of $T$.  The exponential map 
\[
\exp \!: V \to T
\]
is a surjection of groups such that
\[
\omega(\exp(v)) = e^{2\pi i \qform{\omega, v}} \quad \text{for $v \in V$ and $\omega$ a weight}.
\]

Now, since $\qform{\omega, \Qv} \subseteq \Z$ for all $\alpha \in R$, $\exp$ factors through $V/\Qv$.  Since we have picked a basis $\alpha_1, \ldots, \alpha_n$ of the root lattice, this produces a corresponding basis $\alpha_1^\vee, \ldots, \alpha_n^\vee$ of $\Qv$, which itself identifies $V/\Qv$ with $[0,1)^n$, providing an isomorphism
\begin{equation} \label{uni}
[0,1)^n \iso T \quad \text{via} \quad (x_1, \ldots, x_n) \mapsto \exp(\sum x_i \alpha_i^\vee)
\end{equation}
which we also denote by $\exp$.
We can use the Weyl Character Formula to write down the character of each of the representations used to define $\vf$, which allows us to evaluate $\vf(\exp x)$ for any particular $x \in [0,1)^n$.  Repeatedly picking $x$ uniformly at random and drawing the point $\vf(\exp x)$ draws an approximation of $\vf(G)$.  Because this amounts to randomly filling in dots in $\vf(G)$, it might be described as ``painting $\vf(G)$ with a shotgun''.  We did this for the three examples of rank 2 groups we are considering, and the resulting elements of $\vf(G)$ are depicted as blue dots in Figures \ref{a2.fig}--\ref{g2.fig}.


\subsection*{Action by the affine Weyl group}
Above, we used the fact that every element of $G$ is conjugate to an element of $T$ to see that $\vf(T) = \vf(G)$.  Yet we can say something more precise.  The inclusion $T \subset G$  induces a map $T/N_G(T) \to G/G$, and it is famously true that this map is also a homemorphism \cite[\S{IX.2.2}, Cor.~8]{Bou:g7}.  Combined with Proposition \ref{GG.prop}, this identifies $\vf(G)$ with $T/N_G(T)$.  We now explain how to use this fact to draw the boundaries of $\vf(G)$.

The map $\exp$ is invariant under the Weyl group $W = N_G(T)/T$ of $G$, so we may restrict $\exp$ to a fundamental domain for the action of $W$ on $V$ (a ``chamber''), whose closure is
\[
C := \{ v \in V \mid \text{$\qform{\alpha, v} \ge 0$ for all $\alpha \in R$} \}.
\]
We have also observed that $\exp$ is invariant under translating $V$ by elements of $\Qv$, i.e., $\exp$ is invariant under the action of the \emph{affine Weyl group} 
\[
\Wa := W \rtimes \Qv.
\]
So it further makes sense to restrict our attention to a fundamental domain for the action of $\Wa$ on $V$ (an ``alcove''), whose closure is
\[
A = C \cap \{ v \in V \mid \qform{\hst, v} \le 1 \},
\]
see for example \cite[\S{VI.2.3}]{Bou:g4}.  A diagram of $A$ in the case of type $\Gt_2$ is exhibited below in Figure \ref{G2.alcove}.
Pictures of alcoves for the three rank 2 root systems can be found in 
in Plate X of \cite{Bou:g4} or Figures 1--4 in \cite[p.~224]{Ca:simple}\footnote{This 1927 article by Cartan is the one where he classified the elements of finite order in $G$ by a method nowadays called ``Kac coordinates''.  See \cite{Reeder:tor} for full details.}.

In summary, we have continuous maps
\[
A \hookrightarrow [0,1)^n \xrightarrow{\sim} T \to T/N_G(T).
\]
The composition is a bijection, therefore a homeomorphism, compare \cite[\S IX.5.2, Cor.~1]{Bou:g7}.  

\begin{eg}[Rank 1 revisited] \label{rank1.2}
Let's return to the case $G = \SU(2)$ from Example \ref{rank1} and describe $\vf(G)$ by working through the above construction.  The root system has one simple root $\alpha = \hst$, a corresponding simple coroot $\av$, and a fundamental weight $\omega$ such that $\qform{\omega, \av} = 1$.  It follows that $A = [0, 1/2]$.  The representation $\rho$ has weights $\pm \omega$, each with multiplicity 1, so for $a \in A$ we have
\[
\vf(\exp(a)) = e^{2\pi i a} + e^{-2\pi i a} = 2 \cos(2\pi a).
\]
Again we find that $\vf(G) = [-2, 2]$.

For yet a third view on this example, see Exercise 35 on page 49 of \cite{Rees}.
\end{eg}

\subsection*{The alcove \texorpdfstring{$A$}{A} in the rank 2 case}
The rest of this paper focuses on groups of rank 2, so consider the case where $R$ has two simple roots $\alpha_1, \alpha_2$.  In this case, $A$ is a triangle with three boundary lines.
One vertex of $A$, the intersection of the lines $\qform{\alpha_1, \slot} = 0$ and $\qform{\alpha_2, \slot} = 0$, is 0.  

Another vertex is the intersection of $\qform{\alpha_1, \slot} = 0$ with $\qform{\hst, \slot} = 1$.  This is $\ov_2/a_2$, where $\ov_2$ is the fundamental weight of the dual root system $R^\vee$ such that $\qform{\ov_2, \alpha_1} = 0$ and $\qform{\ov_2, \alpha_2} = 1$ and $a_i$ is the coefficient of $\alpha_i$ in the expression $\hst = a_1 \alpha_1 + a_2 \alpha_2$ \cite[\S{VI.2.2}, Cor.]{Bou:g4}.  The third vertex of the triangle is the intersection of $\qform{\alpha_2, \slot} = 0$ and $\qform{\hst, \slot} = 1$, so $\ov_1/a_1$.  That is, the two sides of $A$ that include 0 are $\{ t \ov_i \mid 0 \le t \le 1/a_i \}$ for $i = 1, 2$.

\section{The picture for \texorpdfstring{$\SU(3)$}{SU(3)} (type \texorpdfstring{$\At_2$}{A2})} \label{A.sec}

The compact simply connected group of type $\At_2$, $\SU(3)$, consists of 3-by-3 complex matrices $X$ such that $XX^* = I_3$ and $\det X = 1$.  One of the fundamental representations is  the (tautological) map $\rho\!: \SU(3) \to \GL_3(\C)$, and we have defined $\vf \!: G \to \R^2$ to be $g \mapsto (\Re \tr \rho(g), \Im \tr \rho(g))$.  

We have $\vf(1_G) = (3, 0)$.  Also, the center of $\SU(3)$ is generated by $e^{2\pi i/3} I_3$, so the image $\vf(G)$ is invariant under rotations by 120$^\circ$.

The boundary of the alcove joining $0$ to each of the other two vertices is $\{ t \omega_i^\vee \mid 0 \le t \le 1 \}$ for $i = 1, 2$, where 
\[
\omega_1^\vee = \frac13 (2 \alpha^\vee_1 + \alpha^\vee_2) \eand
\omega_2^\vee = \frac13 (\alpha^\vee_1 + 2\alpha^\vee_1).
\]

We next translate this into elements of the torus $T$.
Concretely, $T$ can be taken to be the collection of diagonal $X$, where $h_1(z) \in T$ has diagonal entries $z, 1/z, 1$ and $h_2(z) \in T$ has diagonal entries $1, 1/z, z$ for $z \in S^1$.  The boundary curve, then, consists of the elements $h_1(z) h_2(z)^2$, for which
\[
\tr \rho (h_1(z) h_2(z)^2) =  2z + z^{-2} \quad \text{for $z \in S^1$}.
\]
The boundary curve is therefore given by 
\begin{equation} \label{a.bd1}
x = 2 \cos(t) + \cos(2t) \eand y = 2\sin(t) - \sin(2t) \quad \text{for $0 \le t \le 2\pi/3$.}
\end{equation}
This is the upper border of Figure \ref{a2.fig} joining $(3,0)$ to $(-3/2, 3\sqrt{3}/2)$.  

\begin{figure}[hbtp]
\includegraphics[width=2.5in]{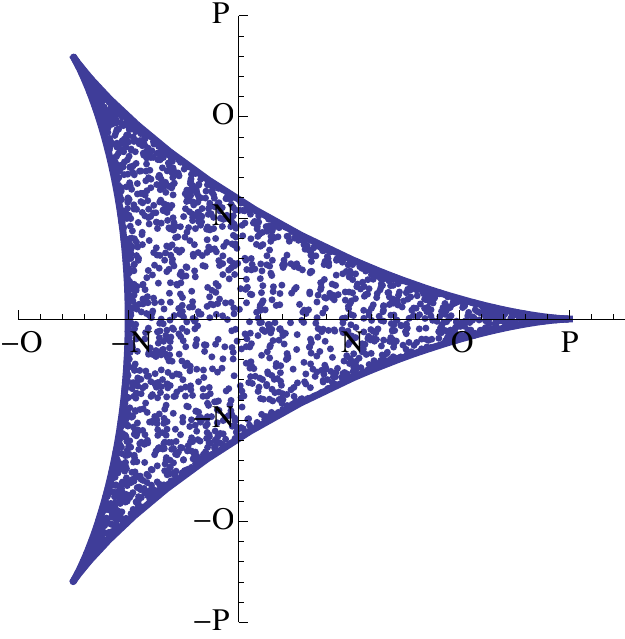}
\caption{Group $\SU(3)$ of type $\At_2$, a hypocycloid.  The vertices are at $(3, 0)$ and $(-3/2, \pm 3\sqrt{3}/2)$.} \label{a2.fig}
\end{figure}

The other two arcs making up the boundary of the figure are necessarily the images of this one under iterated rotation by 120$^\circ$; in fact these are given by Equation \eqref{a.bd1} where $t$ ranges from 0 to $2\pi$.  The resulting figure is a deltoid, also known as a \emph{hypocycloid of three cusps}, the curve traced by a point on the boundary of a circle of radius 1 rolling around the inside of a fixed circle of radius 3.

\begin{rmk*}
In the figure, the symmetry across the horizontal axis reflects the compatibility of $\vf$ with complex conjugation, i.e., $\vf(\overline{g})$ is the reflection about the horizontal axis of $\vf(g)$.
\end{rmk*}

\begin{rmk*}
For any $n$, \cite{Kaiser} draws a 2-dimensional picture of $\SU(n)$ by applying the map $g \mapsto (\Re \tr g, \Im \tr g)$ used here for $\SU(3)$.  As in the preceding remark, since $\SU(n)$ has center the $n$-th roots of unity, the resulting figure will be symmetric under  rotations by $360^\circ/n$.  Indeed, one finds a hypocycloid with $n$ cusps.  Interestingly, embedding $\SU(n)$ in $\SU(n+1)$ allows one to see that each hypocycloid should nest in the next, resulting in some impressive animations such as at \cite{Baez:rolling}.
\end{rmk*}


\section{The picture for \texorpdfstring{$\Bt_2 = \Ct_2$}{B2 = C2}} \label{B.sec}
The compact simply connected form of type $\Bt_2 = \Ct_2$ is usually denoted $\Spin(5)$ or $\Sp(4)$ or $\USp(4)$.  
Here the two fundamental irreducible representations are $\rho_1 \!: G \to \GL_4(\C)$ (the natural representation of $\Sp(4)$ or the spin representation of $\Spin(5)$, which is quaternionic) and $\rho_2 \!: G \to \GL_5(\R)$ (the natural representation of $\SO(5)$).  We define $\vf$ via $\vf(g) = (\tr \rho_1, \tr \rho_2)$.  

The center of $\Sp(4)$ is generated by $-1$, which acts trivially on the 5-dimensional representation --- i.e., $\rho_2(-1) = 1$ --- so it reflects $\vf(G)$ about the vertical axis.  Therefore, while the origin of the alcove maps to $\vf(\exp 0) = \vf(1_G) = (4, 5)$, one of the other vertices of the alcove maps to $(-4, 5)$ and the third maps to a point with $x$-coordinate zero.  By the way, this symmetry around the vertical axis and Example \ref{rootsum} combine to give that the collection $\chi_1(G)$ of $x$-coordinates of points of $\vf(G)$ is $[-4, 4]$.

\begin{figure}[hbtp]
\includegraphics[width=2.5in]{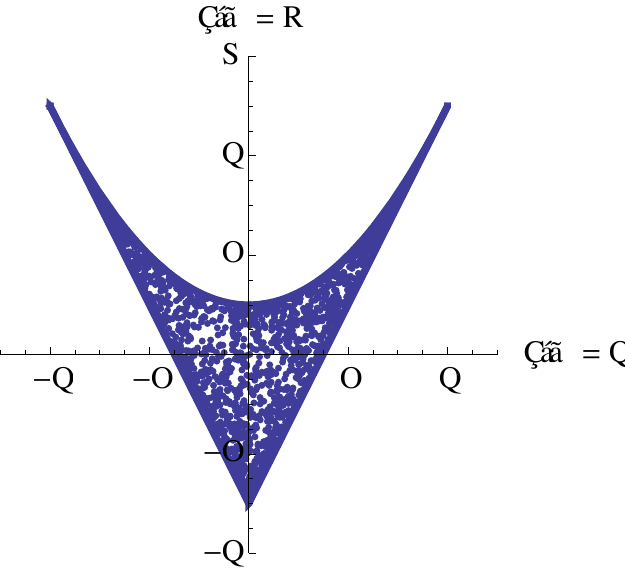}
\captionof{figure}{Group $\Sp(4) = \Spin(5)$ of type $\Ct_2 = \Bt_2$, a ``goatee''.  The vertices are at $(\pm 4, 5)$ and $(0, -3)$.} \label{b2.fig}
\end{figure}


\setlength{\unitlength}{0.05cm}
Let us find the walls of the alcove and thereby calculate the curves bounding $\vf(G)$.
We number the simple roots of $G$ according to the labeling
\[
\begin{picture}(7,2)(0,0)
\put(2,1){\circle*{3}}
\put(2,0.5){\line(1,0){15}}
\put(2,1.5){\line(1,0){15}}
\put(6,-0.5){{\small\mbox{$<$}}}
\put(17,1){\circle*{3}}

\put(-3,-0.5){\mbox{\tiny $1$}}
\put(20,-0.5){\mbox{\tiny $2$}}
\end{picture}
\]
which is the numbering for type $\Ct_2$ in \cite{Bou:g4}.
Using the Weyl character formula, we find that
\begin{align*}
\text{$\rho_1$ has weights} & \pm \omega_1,\ \pm  (\omega_2 - \omega_1) \\
\text{$\rho_2$ has weights} & \pm \omega_2,\ \pm(2\omega_1 - \omega_2), \ 0.
\end{align*}

The inverse root system has
\[
\ov_1 = \av_1+ \av_2 \eand
\ov_2 = \textstyle\frac12 \av_1 + \av_2.
\]
The highest root of $\Ct_2$ is $\hst = 2\alpha_1 + \alpha_2$, so the walls of the alcove through 0 are
\begin{equation} \label{c.wall}
\{ t \ov_1 \mid 0 \le t \le \textstyle\frac12 \} \eand \{ t \ov_2 \mid 0 \le t \le 1 \}.
\end{equation}

Now $\qform{\omega_1, \ov_1} = \qform{\omega_2, \ov_1} = 1$ and employing the identity $e^{i\theta} + e^{-i\theta} = 2 \cos \theta$, we find that
\[
\chi_1(\exp t \ov_1) = 2 \cos(2 \pi t) + 2 \eand
\chi_2(\exp t \ov_1) = 4 \cos(2 \pi t) + 1.
\]
Consequently, the image of the the first wall in \eqref{c.wall} is a line segment joining $(0, -3)$ to $(4, 5)$.

For the second wall in \eqref{c.wall}, we have $\qform{\omega_1, \omega_2^\vee} = 1/2$ and $\qform{\omega_2, \omega_2^\vee} = 1$, so
\[
x(t) = \chi_1(\exp t \ov_2) = 4 \cos(\pi t) \eand
y(t) = \chi_2(\exp t \ov_2) = 3 + 2 \cos(2 \pi t)
\]
for $0 \le t \le 1$.
Eliminating $\cos(\pi t)$ from these equations, we find the equation of a parabola $y = x^2/4 + 1$ for $-4 \le x \le 4$.

By symmetry across the vertical axis, it follows that the third wall,
the image of the line segment joining $\omega_1^\vee/2$ to $\omega_2^\vee$, maps to a line segment joining $(0, -3)$ to $(-4, 5)$.

\begin{rmk*} \label{FKRS.rmk}
Since $\rho_1$ is injective, we may identify $\Sp(4)$ with its image under $\rho_1$, which (up to a choice of basis) is the group of 4-by-4 unitary matrices that are also skew-symmetric.  Trivially, for $g \in \Sp(4)$, $\chi_1(g)$ is the linear coefficient of the characteristic polynomial of $g$.  Examining the weights of $\rho_2$ shows that $\chi_2(g)+1$ is the quadratic coefficient.  In this way, we find that the region in Figure \ref{b2.fig} was already described in \cite[Example 2.1.2]{DiPH} and \cite[p.~1425]{FKRS}.
\end{rmk*}

\section{The picture for \texorpdfstring{$\Gt_2$}{G2}} \label{G.sec}
We now turn to the case where $G$ is $\Gt_2$.  Number the simple roots of $G$ as in the \cite{Bou:g4}, i.e., 
\[
\begin{picture}(7,2)(0,0)
\put(2,1){\circle*{3}}
\put(2,0.1){\line(1,0){15}}
\put(2,1.1){\line(1,0){15}}
\put(2,2.1){\line(1,0){15}}
\put(6,-0.5){{\small\mbox{$<$}}}
\put(17,1){\circle*{3}}

\put(-3,-0.5){\mbox{\tiny $1$}}
\put(20,-0.5){\mbox{\tiny $2$}}
\end{picture}
\]
With this numbering, the action of $G$ on the trace zero octonions is the fundamental irreducible representation $\rho_1 \colon G \to \GL_7(\R)$ and the adjoint representation is the fundamental irreducible representation $\rho_2 \colon G \to \GL_{14}(\R)$.

In this notation, \cite[5.5]{Katz:G2} and \cite[eq.~(37)]{Kaiser} previously noted that $\chi_1(G) =  [-2,7]$.  From another perspective, a result in \cite{Se:trace} says that $\min_{g \in G} \chi_2(g) = -2$.  Both of these statements are visible in Figure \ref{g2.fig}.
\begin{figure}[hbtp]
\includegraphics[width=2.5in]{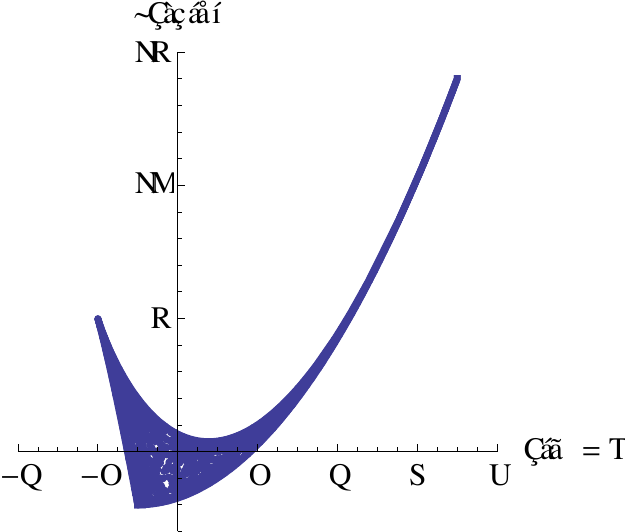} 
\caption{Group $\Gt_2$, a ``tomoe''.  The vertices are at $(7, 14)$, $(-2, 5)$, and $(-1, -2)$.} \label{g2.fig}
\end{figure}

We find that
\begin{align*}
\text{$\rho_1$ has weights} & \pm \omega_1,\ \pm  (-\omega_1 + \omega_2), \ \pm (2\omega_1 - \omega_2),\ 0 \\
\text{$\rho_2$ has weights} & \begin{cases}
\pm \omega_2,\ \pm(3\omega_1 - \omega_2), \ \pm\omega_1,\ \pm(-\omega_1 + \omega_2),\\
\pm(2 \omega_1 - \omega_2), \ \pm (-3 \omega_1 + 2\omega_2), \ 0,\  0.
\end{cases}
\end{align*}
The root system $R$ is isomorphic to its inverse root system $R^\vee$.  We have:
\[
\ov_1 = 2\av_1 + 3 \av_2 \eand
\ov_2 = \av_1 + 2 \av_2.
\]
The highest root is $\hst = 3\alpha_1 + 2\alpha_2$, so the walls of the alcove through 0 are
\begin{equation} \label{g.wall}
\{ t \ov_1 \mid 0 \le t \le \textstyle\frac13 \} \eand \{ t \ov_2 \mid 0 \le t \le \textstyle\frac12 \}.
\end{equation}
In Figure \ref{G2.alcove}, we exhibit the alcove $A$ as a blue triangle.  The other two triangles are also fundamental domains for the affine Weyl group $W_a$ acting on $V$, and reflecting $A$ first to the green triangle and then to the brown triangle, we see that $W_a$ identifies the third wall of the alcove (joining $\ov_1/3$ and $\ov_2/2$) with the line segment joining $\ov_1/3$ to the point labeled $\ov_1/2$ in the figure.  (Note that that point is labeled correctly, because $\ov_1 = \ov_2 + (\alpha_1^\vee + \alpha_2^\vee)$, i.e., the points 0, $\ov_1$, $\ov_2$, $\alpha_1^\vee + \alpha_2^\vee$ are the corners of a parallelogram and the point labeled $\ov_1/2$ is indeed the midpoint of a diagonal.)
This observation about the third wall tells us that replacing the range $0 \le t \le \tfrac13$ in the first set in \eqref{g.wall} with $0 \le t \le \tfrac12$ and applying $\vf \circ \exp$ will produce all three boundary curves for $\vf(G)$.  (We could have employed a similar maneuver also for the other two groups.)

\begin{figure}[hbt]
\includegraphics{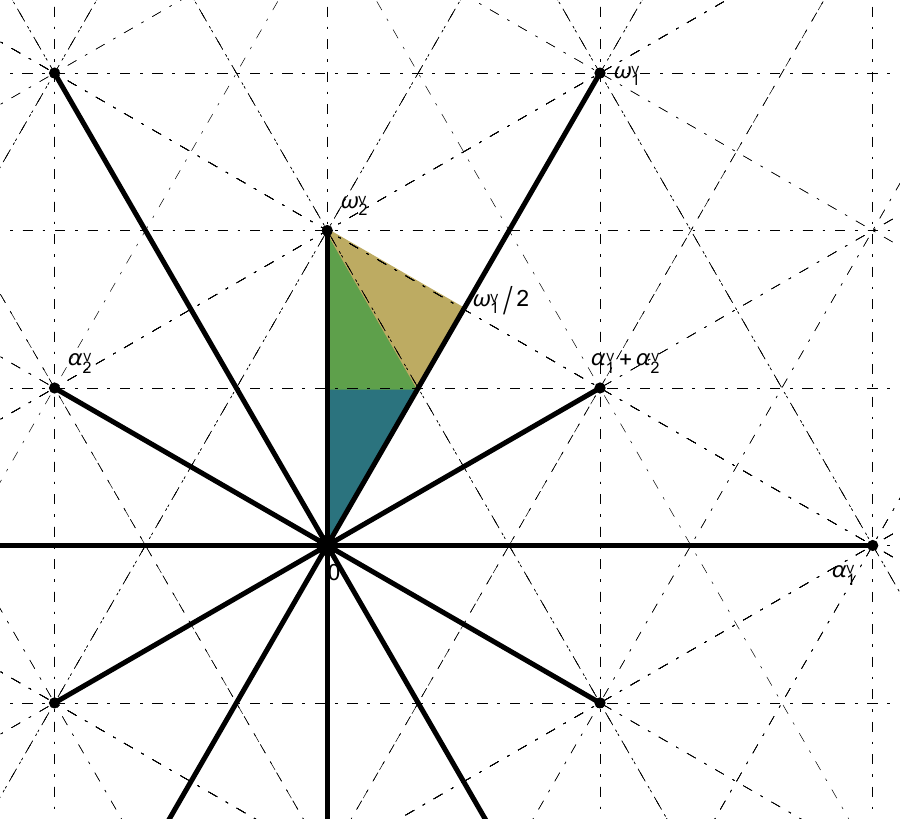}
\caption{A diagram of $V$, the coroot space, for $\Gt_2$, with emphasis on the details around 0 and $\omega_2^\vee$.  Black dots indicate the roots and a circle marks the origin.  A solid line joins the origin to each root.  Dashed lines represent lines of symmetry with respect to the affine Weyl group $W_a$, and the alcove $A$ is the blue triangle.  The other colored triangles are examples of images of $A$ under the group action.} \label{G2.alcove}
\end{figure}

Starting then with that extended wall, we find 
\begin{align*}
\chi_1(\exp t\ov_1) &= 1 + 4 \cos u + 2 \cos(2 u) = 4 (\cos u)^2 + 4 \cos u - 1 \quad \text{and}  \\
\chi_2(\exp t\ov_1) &= 4 + 4 \cos u + 2 \cos(2u) + 4 \cos(3u) \\ &= 16 (\cos u)^3 + 4 (\cos u)^2 - 8 \cos u + 2
\end{align*}
for $u = 2\pi t$, so $0 \le u \le \pi$.  This curve passes from the upper right at $(7, 14)$ at $u = 0$, to the upper left at $(-2, 5)$ at $u = 2 \pi / 3$, and down to $(-1,-2)$ at $u = \pi$.

\begin{rmk}
Setting $x(t) = \chi_1(\exp t \ov_1)$ and $y(t) = \chi_2(\exp t\ov_1)$ and eliminating $\cos(u)$ from the 
equations gives
\[
y^2 + 10y - 7 = 4x^3 -x^2 - 2x - 10xy.
\]
This resembles the equation for an elliptic curve, but it's not because this one is singular --- an ordinary cusp --- at $(x,y) = (-2,5)$.
Solving the equation for $y$ gives
\[
y = -5 - 5x \pm 2 \sqrt{8 + 12x + 6x^2 + x^3}.
\]
The ``line" joining $(-2, 5)$ with $(-1, -2)$ has equation
\[
y = -5 - 5x - 2 \sqrt{8 + 12x + 6x^2 + x^3} \quad \text{for $-1 \le x \le -2$.}
\]
And the curve joining $(-2, 5)$ with $(7, 14)$ has equation
\[
y = -5 - 5x + 2 \sqrt{8 + 12x + 6x^2 + x^3} \quad \text{for $-2 \le x \le 7$.}
\]
\end{rmk}

What about the final boundary curve, the image of the second wall in \eqref{g.wall}?  We have:
\begin{align*}
\chi_1(\exp t\ov_1) &= 3 + 4 \cos u \quad \text{and}  \\
\chi_2(\exp t\ov_1) &= 4 + 8 \cos u + 2 \cos(2u) = 4 (\cos u)^2 + 8 \cos u + 2
\end{align*}
for $u = 2\pi t$, so $0 \le u \le \pi$.  Eliminating $\cos u$ we find
\[
y = \frac14 (x^2 + 2x - 7) \quad \text{for $-1 \le x \le 7$,}
\]
a parabola.

Overall, the shape in Figure \ref{g2.fig} resembles a tomoe, a comma-shaped figure that appears more commonly doubled (futatsudomoe) or tripled (mitsudomoe).

\section{Haar measure} \label{haar.sec}

The groups $G$ considered here have an invariant measure, the \emph{Haar measure}.  By combining calculations of Steinberg and Weyl, Serre provided in \cite[Th.~A.2]{CDSS} an explicit formula for the density $\varphi$ (with respect to the standard measure) of the image of the Haar measure under $\vf$, under the assumption that $-1$ is in the Weyl group.  Specifically, for $t \in T$, we have
\begin{equation} \label{varphi.def}
\varphi(\vf(t)) = \frac{\sqrt{\left|\prod_{\alpha > 0} (\alpha(t) + 1/\alpha(t) - 2)\right|}}{(2\pi)^n}
\end{equation}
where the product is over positive roots $\alpha$.

With this formula in hand, we can calculate $\varphi$ explicitly at any given point using a computer.  Figure \ref{haar.fig} shows a contour plot for the values of $\varphi$ for $G = \Sp(4)$ and $\Gt_2$.  (A 3-dimensional plot of the $\Sp(4)$ picture appears as the right panel in \cite[p.~1432, Figure 2]{FKRS}, cf.~Remark \ref{FKRS.rmk}.)
\begin{figure}[hbt]
\includegraphics[height=2.2in]{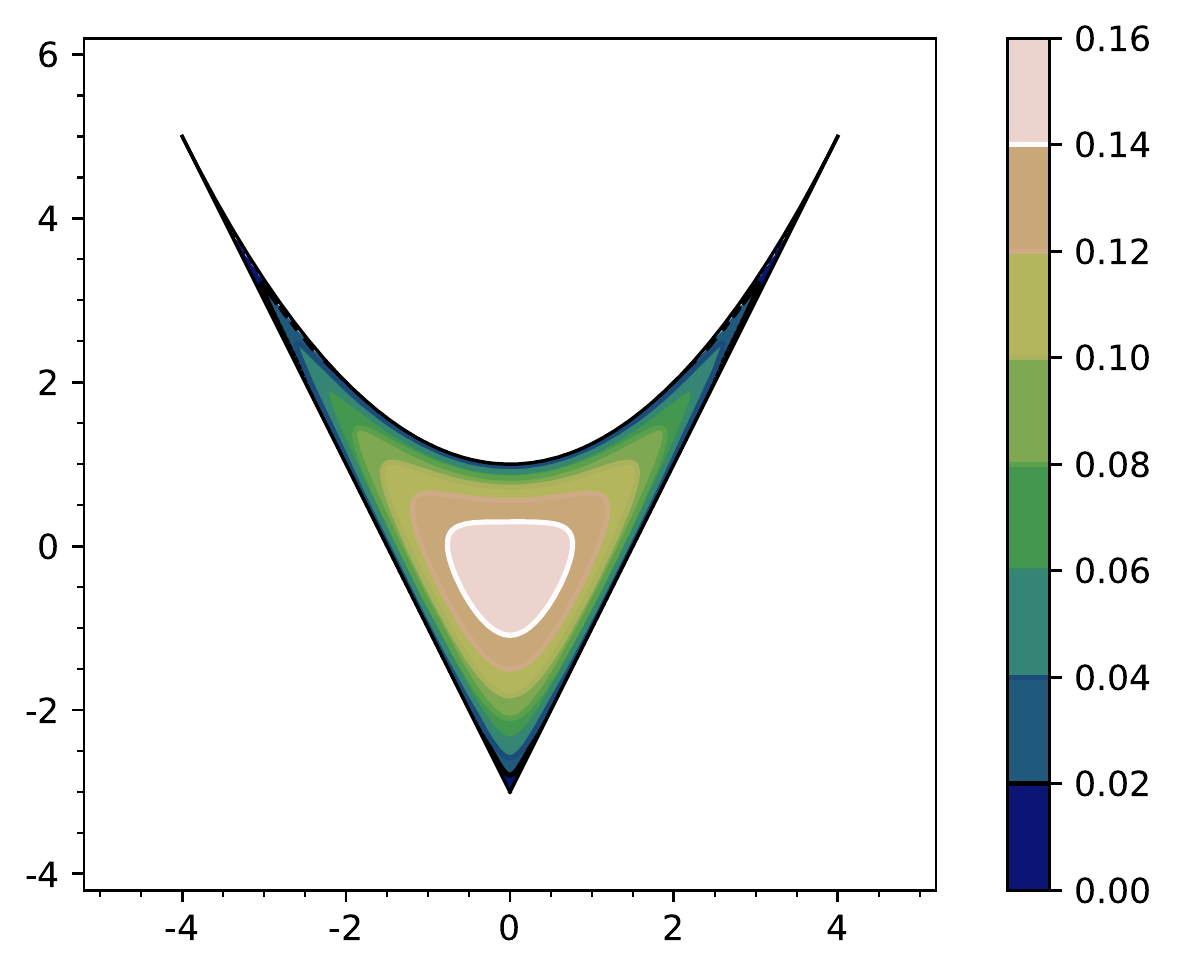} \quad
\includegraphics[height=2.2in]{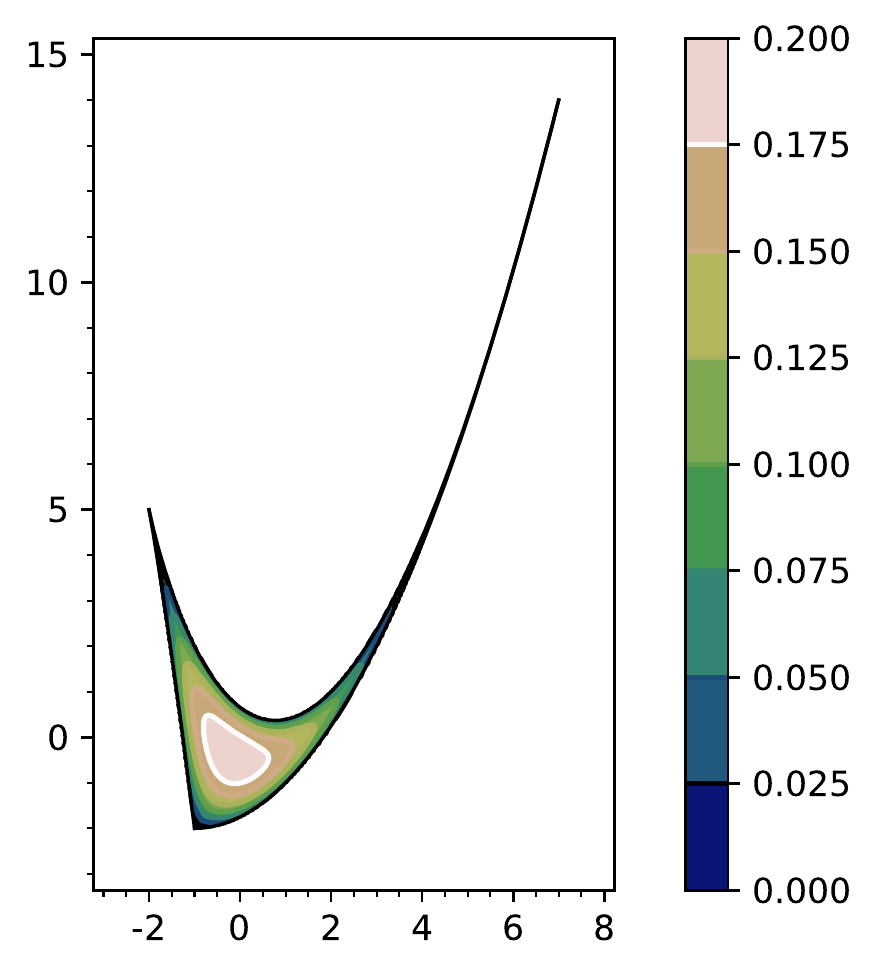}
\caption{Contour plots of the density $\varphi$ of the pushforward of the Haar measure with respect to the standard measure.  The group $\Sp(4)$ is on the  left and $G_2$ is on the right.  The axes in each panel  are the same as in Figures \ref{b2.fig} and  \ref{g2.fig}.} \label{haar.fig}
\end{figure}

The expression inside the absolute value in Equation \eqref{varphi.def} is real and invariant under the Weyl group, so it can be rewritten as a function of $x(t) = \chi_1(t)$ and $y(t) = \chi_2(t)$ --- call it $D(x, y)$.  Doing this for type $\Ct_2$, we find:
\[
D(x,y) = (x^2+4-4y)(-2x-3-y)(2x-3-y),
\]
which is (up to a scalar factor) the product of the equations for the boundary curves.  The maximum of $D$ occurs at $D(0,-1/3)=1024/27$, i.e., $\max(\vf(t)) = 2^3/(\pi^2 3^{3/2})$.  Interestingly, this maximum occurs  at $t = \exp(t_1, 1/2)$, where $t_1 \approx 0.348$  appears to be an algebraic integer that is irrational.  In particular $t$ appears to have infinite order as an element of $T$.

Performing the same computation for $\Gt_2$, we find:
\[
D(x,y) = (x^2 +  2x -  7 - 4y)(y^2 + 10y-7-4x^3 +x^2 + 2x+10xy).
\]
The maximum of this  $D$ occurs at 
\[
D(-1/5, -2/5) = 2^8  3^6 5^{-5} \approx 60,
\]
  i.e.,  
  \[
  \max_{t \in T} \, \vf(t) = 2^23^3/(\pi^2 5^{5/2}) \approx 0.2.
  \]   Note that this function has very small values as we move along the arm that reaches out to $\vf(1_G)$ at $(7, 14)$.  Indeed, the integral of $\varphi$ over the end of the arm, for $x \ge 6$, is less than $10^{-6}$, as opposed to the integral over the whole region, which is 1.

\section{Conclusion}

We have now exhibited two ways to draw the picture $\vf(G)$ of a compact Lie group $G$.  One is approximate in nature, and involves randomly generating points in $\vf(G)$, which we described as painting with a shotgun.  Another is rigorous, in that $\vf(G)$ is the image of a simplex, and we exhibited how to write down equations for the images of each of the faces of the simplex.   We worked through all the details for each of the three rank 2 cases.  We also plotted the push forward of the Haar measure to $\vf(G)$. 

We leave it to the interested reader to take these results further.  For example, one could plot the pictures in $\R^3$ for the three rank 3 simple and simply connected compact groups, $\SU(4)$, $\Spin(7)$, and $\Sp(6)$.  Alternatively, one could look at split semisimple groups instead of compact ones, and therefore consider also fields other than $\R$.
In yet another direction, there are subobjects of $G$ that are unique up to $G$-conjugacy.  For example, there is a unique rank 1 subtorus  that is a maximal torus of a principal $A_1$.  For another, among the elements of finite order in $T$ there is Kostant's principal element from \cite[\S8.6]{Kost:prin} (cf.~\cite{Prasad}), which has the smallest possible order of a regular torsion element in $T$, namely the Coxeter number.  One could plot the image of each of these in $\vf(G)$.

\subsection*{Acknowledgments} Thank you to Serre for interesting conversations on this subject and to Mikhail Borovoi and Drew Sutherland for providing references and pointers.

\providecommand{\bysame}{\leavevmode\hbox to3em{\hrulefill}\thinspace}
\providecommand{\MR}{\relax\ifhmode\unskip\space\fi MR }
\providecommand{\MRhref}[2]{%
  \href{http://www.ams.org/mathscinet-getitem?mr=#1}{#2}
}
\providecommand{\href}[2]{#2}


\end{document}